\numberwithin{equation}{section}
\def\al{\alpha}
\def\de{\delta}
\def\Ga{\Gamma}
\def\R{\mathbb{R}}
\def\C{\mathbb{C}}
\def\N{\mathbb{N}}
\newcommand{\SU}{\mathrm{SU}}
\newcommand{\cA}{\mathscr{A}}
\newtheorem{Theorem}{Theorem}[section]
\newtheorem{Corollary}[Theorem]{Corollary}
\newtheorem{Lemma}[Theorem]{Lemma}
\theoremstyle{definition}
\newtheorem{Example}[Theorem]{Example}
\newtheorem{Remark}[Theorem]{Remark} }
\begin{document}

\allowdisplaybreaks

\renewcommand{\thefootnote}{$\star$}

\newcommand{\arXivNumber}{1509.06143}

\renewcommand{\PaperNumber}{008}

\FirstPageHeading

\ShortArticleName{Orthogonal vs.\ Non-Orthogonal Reducibility of Matrix-Valued Measures}

\ArticleName{Orthogonal vs.\ Non-Orthogonal Reducibility\\ of Matrix-Valued Measures\footnote{This paper is a~contribution to the Special Issue
on Orthogonal Polynomials, Special Functions and Applications.
The full collection is available at \href{http://www.emis.de/journals/SIGMA/OPSFA2015.html}{http://www.emis.de/journals/SIGMA/OPSFA2015.html}}}

\Author{Erik KOELINK~$^\dag$ and Pablo ROM\'AN~$^{\dag\ddag}$}

\AuthorNameForHeading{E.~Koelink and P.~Rom\'an}

\Address{$^\dag$~IMAPP, Radboud Universiteit, Heyendaalseweg 135, 6525 GL Nijmegen, The Netherlands}
\EmailD{\href{mailto:e.koelink@math.ru.nl}{e.koelink@math.ru.nl}}
\URLaddressD{\url{http://www.math.ru.nl/~koelink/}}

\Address{$^\ddag$~CIEM,
FaMAF, Universidad Nacional de C\'ordoba, Medina Allende s/n Ciudad Universitaria,\\
\hphantom{$^\ddag$}~C\'ordoba, Argentina}
\EmailD{\href{mailto:roman@famaf.unc.edu.ar}{roman@famaf.unc.edu.ar}}
\URLaddressD{\url{http://www.famaf.unc.edu.ar/~roman/}}

\ArticleDates{Received September 23, 2015, in f\/inal form January 21, 2016; Published online January 23, 2016}

\Abstract{A matrix-valued measure $\Theta$ reduces to measures of smaller size if there exists a constant invertible matrix $M$ such that $M\Theta M^*$ is  block diagonal. Equivalently, the real vector space $\cA$ of all matrices $T$ such that $T\Theta(X)=\Theta(X) T^*$ for any Borel set~$X$ is non-trivial. If the subspace $A_h$ of self-adjoints elements  in the commutant algebra $A$ of $\Theta$ is non-trivial, then $\Theta$ is reducible via a unitary matrix. In this paper we prove that $\cA$ is $*$-invariant if and only if $A_h=\cA$, i.e., every reduction of $\Theta$ can be performed via a unitary matrix.
The motivation for this paper comes from families of matrix-valued polynomials related to the group $\SU(2)\times \SU(2)$ and its quantum analogue. In both cases  the commutant algebra $A=A_h\oplus iA_h$ is of dimension two and the matrix-valued measures reduce unitarily into a~$2\times 2$ block diagonal matrix. Here we show that there is no further non-unitary reduction.}

\Keywords{matrix-valued measures; reducibility; matrix-valued orthogonal polynomials}

\Classification{33D45; 42C05}

\renewcommand{\thefootnote}{\arabic{footnote}}
\setcounter{footnote}{0}

\section{Introduction}

The theory of matrix-valued orthogonal polynomials was initiated by Krein in 1949 and, since then, it was developed in dif\/ferent directions. From the perspective of the theory of orthogonal polynomials, one wants to study families of  truly matrix-valued orthogonal polynomials. Here is where the issue of reducibility comes into play. Given a matrix-valued measure, one can construct an equivalent measure by multiplying on the left by a constant invertible matrix and on the right by its adjoint. If the equivalent measure is a block diagonal matrix, then all the objects of interest (orthogonal polynomials, three-term recurrence relation, etc.) reduce to block diagonal matrices so that we could restrict to the study of the blocks of smaller size. An extreme situation occurs when the matrix-valued measure is equivalent to a diagonal matrix in which case we are, essentially, dealing with scalar orthogonal polynomials.

Our interest in the study of the reducibility of matrix-valued measures was triggered by
the families of matrix-valued orthogonal polynomials introduced in \cite{AKR15,KoeldlRR, KoelvPR1,KoelvPR2}.
In \cite{KoelvPR1} the study of the spherical functions of the group $\SU(2)\times \SU(2)$ leads to a matrix-valued measure~$\Theta$ and a sequence of matrix-valued orthogonal polynomials with respect to~$\Theta$. From group theoretical considerations, we were able to describe the symmetries of $\Theta$ and pinpoint two linearly independent matrices in the commutant of~$\Theta$, one being the identity. The proof that these matrices actually span the commutator required a careful computation. It then turns out that it is possible to conjugate~$\Theta$ with a constant unitary matrix to obtain a~$2\times 2$ block diagonal matrix.
An analogous situation holds true for a one-parameter extension of this example~\cite{KoeldlRR}. In~\cite{AKR15} from the study of the quantum analogue of $\SU(2)\times \SU(2)$ we constructed matrix-valued orthogonal polynomials which are matrix analogues of a subfamily of Askey--Wilson polynomials. The weight matrix can also be unitarily reduced to a $2\times 2$ block diagonal matrix in this case, again arising from quantum group theoretic considerations.

In \cite{Tira15}, the authors study non-unitary reducibility for matrix-valued measures and prove that a matrix-valued measure~$\Theta$ reduces into a block diagonal measure if the real vector space $\cA$ of all matrices $T$ such that $T\Theta(X)= \Theta(X) T^*$ for any Borel set~$X$ is not trivial, in contrast to the reducibility via unitary matrices that occurs when the commutant algebra of $\Theta$ is not trivial.

The aim of this paper is to develop a criterion to determine whether unitary and non-unitary reducibility of a weight matrix $W$ coincide in terms of the $*$-invariance of~$\cA$. Every reduction of~$\Theta$ can be performed via a unitary matrix if and only if $\cA$ is $*$-invariant, in which case $\cA=A_h$ where $A_h$ is the Hermitian part of the commutant of~$\Theta$, see Section~\ref{sec:reducibility_no_moments}. We apply our criterion to our examples \cite{AKR15,KoeldlRR, KoelvPR1} and we conclude that there is no further reduction than the one via a unitary matrix. We expect that a similar strategy can be applied to more general families of matrix-valued orthogonal polynomials as, for instance, the families related to compact Gelfand pairs given in \cite{HeckvP}. We also discuss an example where $\cA$ and $A_h$ are not equal.

It is worth noting that unitary reducibility strongly depends on the normalization of the matrix-valued measure. Indeed, if the matrix-valued measure is normalized by $\Theta(\R)=I$, then the real vector space $\cA$ is $*$-invariant and by our criterion, unitary and non-unitary reduction coincide. This is discussed in detail in Remark \ref{rmk:normalization}.

\section{Reducibility of matrix-valued measures}
\label{sec:reducibility_no_moments}
Let $M_N(\C)$ be the algebra of $N\times N$ complex matrices. Let $\mu$ be a $\sigma$-f\/inite positive measure on the real line and let the weight function $W\colon \mathbb{R} \to M_N(\C)$ be strictly positive def\/inite almost everywhere with respect to~$\mu$. Then
\begin{gather}
\label{eq:positive_measure_W}
\Theta(X)=\int_X W(x)d\mu(x),
\end{gather}
is a $M_N(\C)$-valued measure on $\R$, i.e., a function from the $\sigma$-algebra $\mathscr{B}$ of Borel subsets of $\R$ into the positive semi-def\/inite matrices in $M_N(\C)$ which is countably additive. Note  that any positive matrix measure can be obtained as in~\eqref{eq:positive_measure_W}, see for instance \cite[Theorem 1.12]{Berg}  and \cite{DamaPS}. More precisely, if $\tilde \Theta$ is a $M_N(\C)$-valued measure, and $\tilde \Theta_{\mathrm{tr}}$ denotes the scalar measure def\/ined by $\tilde \Theta_{\mathrm{tr}}(X)=\mathrm{Tr}(\tilde \Theta(X))$, then the matrix elements $\tilde \Theta_{ij}$ of $\tilde \Theta$ are absolutely continuous with respect to $\tilde \Theta_{\mathrm{tr}}$ so that,  by the Radon--Nikodym theorem, there exists a  positive def\/inite function $V$ such that
\begin{gather*}
d\tilde \Theta_{i,j}(x) = V(x)_{i,j} \, d\tilde \Theta_{\mathrm{tr}}(x).
\end{gather*}
Note that we do not require the normalization $\Theta(\R)=I$ as in \cite{DamaPS}. A detailed discussion about the role of  the normalization in the reducibility of the measure is given at the end of Section \ref{sec:reducibility_with_moments}.

Going back to the measure~\eqref{eq:positive_measure_W}, we have $d\Theta_{\mathrm{tr}}(x)=\mathrm{Tr}(W(x))\, d\mu(x)$ so that $\Theta_{\mathrm{tr}}$ is absolutely continuous with respect to~$\mu$.  Note that $\mathrm{Tr}(W(x))>0$ a.e.\ with respect to $\mu$ so that $\mu$ is absolutely continuous with respect to $\Theta_{\mathrm{tr}}$. The unicity of the Radon--Nikodym theorem implies $W(x)=V(x)\mathrm{Tr}(W(x))$, i.e.,~$W$ is a positive scalar multiple of $V$.

We say that two $M_N(\C)$-valued measures $\Theta_1$ and $\Theta_2$ are equivalent if there exists a constant nonsingular matrix $M$ such that $\Theta_1(X)=M\Theta_2(X)M^*$ for all $X\in \mathscr{B}$, where $*$ denotes the adjoint. A $M_N(\C)$-valued measure matrix $\Theta$ reduces to matrix-valued measures of smaller size if there exist positive matrix-valued measures $\Theta_1,\ldots, \Theta_m$ such that $\Theta$ is equivalent to the block diagonal matrix $\mathrm{diag}(\Theta_1(x),\Theta_2(x),\ldots,\Theta_m(x))$. If $\Theta$ is equivalent to a diagonal matrix, we say that $\Theta$ reduces to scalar measures. In \cite[Theorem~2.8]{Tira15}, the authors prove that a matrix-valued measure $\Theta$ reduces to matrix-valued measures of smaller size if and only if the real vector space
\begin{gather*}
\cA = \cA(\Theta) =\big\{ T\in M_{N}(\C)\,|\, T\Theta(X) = \Theta(X) T^* \,\,\, \forall\, X\in \mathscr{B} \big\},
\end{gather*}
contains, at least, one element which is not a multiple of the identity, i.e., $\R I \subsetneq \cA$, where $I$ is the identity. Note that our def\/inition of $\cA$ dif\/fers slightly from the one considered in~\cite{Tira15}. If~$W$ is a weight matrix for $\Theta$, then we require that $T\in \cA$ satisf\/ies $TW(x)=W(x)T^*$ almost everywhere with respect to~$\mu$.

If there exists a subspace $V\subset \C^N$ such that $\Theta(X)V \subset V$ for all $X\in\mathscr{B}$, since $\Theta(X)$ is self-adjoint for all $X\in\mathscr{B}$, it follows that $\Theta(X)V^\perp \subset V^\perp$ for all $X\in \mathscr{B}$. If $\iota_V\colon V\to \C^N$ is the embedding of $V$ into $\C^N$, then $P_V=\iota_V \iota_V^*\in M_N(\C)$ is the orthogonal projection on $V$ and satisf\/ies
\begin{gather*}
P_V\Theta(X) =\Theta(X)P_V, \qquad \text{for all }X\in\mathscr{B}.
\end{gather*}
Hence, the projections on invariant subspaces belong to the commutant algebra
\begin{gather*}
A=A(\Theta)=\big\{ T\in M_{N}(\C)\,|\,  T\Theta(X) = \Theta(X) T \,\,\, \forall\, X\in \mathscr{B}\big\}.
\end{gather*}
Since $\Theta(X)$ is self-adjoint for all $X\in\mathscr{B}$, $A$ is a unital $*$-algebra over $\C$.
We denote by $A_h$ the real subspace of $A$ consisting of all Hermitian matrices.  Then it follows that $A=A_h\oplus i A_h$. If $\C I \subsetneq A$, then there exists $T\in A_h$ such that $T\notin \C I$. The eigenspaces of $T$ for dif\/ferent eigenvalues are orthogonal invariant subspaces for $\Theta$. Therefore $\Theta$ is equivalent via a unitary matrix to matrix-valued measures of smaller size.

\begin{Remark}Let $S\in A$ and $T\in \mathscr{A}$. Then we observe that $S^*\in A$ and therefore $STS^*\Theta(x)=\Theta(x)ST^*S^*=\Theta(x)(STS^*)^*$ for all $X\in \mathscr{B}$. Hence there is an action from $A$ into $\mathscr{A}$ which is given by
\begin{gather*}S\cdot T=STS^*.\end{gather*}
\end{Remark}

\begin{Lemma}
\label{lem:no_skew_symmetric}
$\mathscr{A}$ does not contain non-zero skew-Hermitian elements.
\end{Lemma}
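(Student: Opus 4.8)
The plan is to exploit the positivity of $\Theta$. A skew-Hermitian element $T\in\cA$ would anticommute with every value $\Theta(X)$, and testing this relation against the trace together with a \emph{positive definite} value of $\Theta$ squeezes $T^*T$ down to zero.

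First I would fix a Borel set $X$ with $\mu(X)>0$; such an $X$ exists because $\Theta$ is not the zero measure, whereas $\mu(X)=0$ would force $\Theta(X)=\int_X W\,d\mu=0$. For this $X$ the matrix $P:=\Theta(X)$ is positive definite: for every $v\in\C^N$ with $v\neq 0$,
\begin{gather*}
v^*Pv=\int_X v^*W(x)v\,d\mu(x)>0,
\end{gather*}
since $v^*W(x)v>0$ for $\mu$-a.e.\ $x$ by the strict positive-definiteness of $W$. (Equivalently one may work pointwise at a single $x_0$ lying in the $\mu$-full-measure set where $W(x_0)$ is positive definite and the identity $TW(x_0)=W(x_0)T^*$ holds, taking $P=W(x_0)$.)

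Now let $T\in\cA$ satisfy $T^*=-T$. The defining relation of $\cA$ gives $TP=\Theta(X)T^*=-PT$. The key step is a short trace computation: using this relation and the cyclicity of the trace,
\begin{gather*}
\mathrm{Tr}(T^*TP)=\mathrm{Tr}\big(T^*(TP)\big)=-\mathrm{Tr}(T^*PT)=-\mathrm{Tr}(TT^*P)=-\mathrm{Tr}(T^*TP),
\end{gather*}
the last equality because $TT^*=-T^2=T^*T$; hence $\mathrm{Tr}(T^*TP)=0$. Writing $P=P^{1/2}P^{1/2}$ with $P^{1/2}$ positive definite (hence invertible) and applying cyclicity once more,
\begin{gather*}
0=\mathrm{Tr}(T^*TP)=\mathrm{Tr}\big((TP^{1/2})^*(TP^{1/2})\big)=\big\|TP^{1/2}\big\|^2,
\end{gather*}
with $\|\cdot\|$ the Hilbert--Schmidt norm, whence $TP^{1/2}=0$ and therefore $T=0$.

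I do not anticipate a real obstacle: all the manipulations are elementary, and the only point deserving a word of care is the standing hypothesis that $\Theta$ is a genuine, non-trivial matrix-valued measure, which is precisely what furnishes a Borel set---or a point---on which $W$, hence $\Theta$, is strictly positive definite.
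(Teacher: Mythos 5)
Your proof is correct. It shares the paper's overall architecture---reduce to the algebraic fact that a skew-Hermitian $T$ anticommuting with a strictly positive definite matrix $P=\Theta(X_0)$ must vanish---but you execute that key step differently. The paper unitarily diagonalizes the skew-Hermitian element as $UDU^*$ with purely imaginary eigenvalues and reads off $\lambda_i\,(U^*\Theta(X_0)U)_{i,i}=-(U^*\Theta(X_0)U)_{i,i}\,\lambda_i$ from the diagonal, using positivity of the diagonal entries to kill each $\lambda_i$. You instead show $\mathrm{Tr}(T^*TP)=-\mathrm{Tr}(T^*TP)$ via cyclicity and normality of $T$, then identify $\mathrm{Tr}(T^*TP)$ with $\|TP^{1/2}\|^2$ to conclude $T=0$. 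Your trace argument avoids the spectral decomposition entirely and is arguably slightly slicker; the paper's version makes the mechanism (imaginary eigenvalues against positive diagonal entries) more visible. Both hinge on the same input, namely the existence of a Borel set on which $\Theta$ is strictly positive definite, which you justify correctly from the a.e.\ strict positive definiteness of $W$ and $\mu(X)>0$.
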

\begin{proof}
Suppose that $S\in \mathscr{A}$ is skew-Hermitian. Then $S$ is normal and thus unitarily diago\-na\-lizable, i.e., there exists a unitary matrix $U$ and a diagonal matrix $D=\mathrm{diag}(\lambda_1,\ldots,\lambda_N)$, $\lambda_i\in i\mathbb{R}$, such that $S=UDU^*$, see for instance \cite[Chapter~4]{Horn}. Since $S\in \cA$, we get
\begin{gather*}
D U^*\Theta(X)U = U^* \Theta(X) U D^* =  -U^* \Theta(X) U D ,\qquad \text{ for all } X\in \mathscr{B}.
\end{gather*}
The $(i,i)$-th entry of the previous equation is given by
\begin{gather*}
\lambda_i (U^*\Theta(X)U)_{i,i} = -(U^*\Theta(X)U)_{i,i} \lambda_i.
\end{gather*}
Take any $X_0\in \mathscr{B}$ such that $\Theta(X_0)$ is strictly positive def\/inite. Since $U$ is unitary, $U^*\Theta(X_0)U$ is strictly positive def\/inite and therefore $(U^*\Theta(X_0)U)_{i,i}>0$ for all $i=1,\ldots,N$, which implies that~$\lambda_i=0$.
\end{proof}

\begin{Theorem}
\label{thm:AcapA*}
$\cA \cap \cA^* = A_{h}$.
\end{Theorem}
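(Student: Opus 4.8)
The plan is to prove the two inclusions $A_h \subseteq \cA \cap \cA^*$ and $\cA \cap \cA^* \subseteq A_h$ separately; the first is immediate, and the second is where Lemma~\ref{lem:no_skew_symmetric} does the real work.

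For the first inclusion, I would note that if $T \in A_h$ then $T = T^*$, and hence $T\Theta(X) = \Theta(X)T = \Theta(X)T^*$ for every $X \in \mathscr{B}$, so $T \in \cA$; since moreover $T = T^*$, the same $T$ lies in $\cA^*$, giving $T \in \cA \cap \cA^*$.

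For the reverse inclusion, take $T \in \cA \cap \cA^*$. Since $\cA^* = \{S^* : S \in \cA\}$ and $S \mapsto S^*$ is an involution, membership of $T$ in $\cA^*$ is equivalent to $T^* \in \cA$. Therefore both
\begin{gather*}
T\,\Theta(X) = \Theta(X)\,T^*, \qquad T^*\,\Theta(X) = \Theta(X)\,T \qquad \text{for all } X \in \mathscr{B}
\end{gather*}
hold. Subtracting these, and using $(T - T^*)^* = -(T - T^*)$, I obtain
\begin{gather*}
(T - T^*)\,\Theta(X) = \Theta(X)\,(T^* - T) = \Theta(X)\,(T - T^*)^* \qquad \text{for all } X \in \mathscr{B},
\end{gather*}
so the skew-Hermitian matrix $T - T^*$ belongs to $\cA$. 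By Lemma~\ref{lem:no_skew_symmetric} it must vanish, so $T$ is Hermitian. Feeding $T = T^*$ back into the first relation gives $T\Theta(X) = \Theta(X)T$ for all $X$, i.e.\ $T \in A$, and being Hermitian, $T \in A_h$. This yields $\cA \cap \cA^* \subseteq A_h$ and finishes the proof.

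The one point I would watch carefully is the combination of the two defining relations: it must be done by subtraction (addition only recovers that $T + T^*$ is a Hermitian element of the commutant $A$, which by itself forces nothing on $T$), and the sign identity $(T-T^*)^* = -(T-T^*)$ is exactly what lets the resulting relation be read as the defining condition of $\cA$. Once that skew-Hermitian element of $\cA$ is in hand, the substantive input — the positivity argument of Lemma~\ref{lem:no_skew_symmetric} — is already available, so I do not expect any further obstacle.
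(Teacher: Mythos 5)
Your proof is correct and follows essentially the same route as the paper: the easy inclusion $A_h \subseteq \cA\cap\cA^*$ from self-adjointness, and for the converse, showing $T-T^*$ is a skew-Hermitian element of $\cA$ (the paper phrases the subtraction as using that $\cA$ is a real vector space containing both $T$ and $T^*$) and invoking Lemma~\ref{lem:no_skew_symmetric}. Your explicit final step verifying $T\in A$ after concluding $T=T^*$ is a small clarification the paper leaves implicit, but the argument is identical in substance.
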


\begin{proof}
Observe that if $T\in A_h$, then $T\Theta(X)=\Theta(X)T=\Theta(X)T^*$  for all $X\in \mathscr{B}$, and thus $A_h\subset \cA$. Since $T$ is self-adjoint, we also have $T=T^*\in \cA^*$.

On the other hand, let $T\in \cA \cap \cA^*$. Then $T^*\in \cA^* \cap \cA\subset \cA$, and since $\cA$ is a real vector space, $(T-T^*)\in \cA$. The matrix $(T-T^*)$ is skew-Hermitian and therefore by Lemma~\ref{lem:no_skew_symmetric} we have $(T-T^*)=0$. Hence~$T$ is self-adjoint and $T\in A_h$.
\end{proof}

\begin{Corollary}
\label{cor:AcapA_T=T*}
If $T\in \cA\cap\cA^*$, then $T=T^*$.
\end{Corollary}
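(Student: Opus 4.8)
The plan is to deduce this immediately from Theorem~\ref{thm:AcapA*}. That theorem identifies $\cA\cap\cA^*$ with $A_h$, the real subspace of the commutant algebra $A$ consisting of Hermitian matrices. Since every element of $A_h$ is by definition self-adjoint, any $T\in\cA\cap\cA^*$ satisfies $T=T^*$, which is exactly the assertion. In other words, the corollary is a verbatim restatement of one direction already contained in the theorem, recorded separately because this is the form in which the conclusion gets used later (for instance when ruling out further non-unitary reductions in the $\SU(2)\times\SU(2)$ examples, where $\cA=A_h$ is forced).

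If one prefers an argument that does not invoke the full strength of Theorem~\ref{thm:AcapA*}, I would instead reproduce the short chain used in its proof: given $T\in\cA\cap\cA^*$, note that $T^*\in\cA^*\cap\cA\subset\cA$, hence $T-T^*\in\cA$ because $\cA$ is a real vector space; this matrix is skew-Hermitian, so Lemma~\ref{lem:no_skew_symmetric} forces $T-T^*=0$, i.e., $T=T^*$.

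There is no genuine obstacle here. The only point deserving a moment's care is that $\cA$ is merely a real vector space, not a complex one, so one is entitled to form the real combination $T-T^*$ (but not $i(T-T^*)$); fortunately only the real combination is needed.
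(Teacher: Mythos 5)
Your proposal is correct and matches the paper: the paper derives the corollary directly from (the proof of) Theorem~\ref{thm:AcapA*}, which is exactly your first route, and your alternative argument simply reproduces the second half of that theorem's proof verbatim. Your remark about $\cA$ being only a real vector space is a fair point of care, and the real combination $T-T^*$ is indeed all that is needed.
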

\begin{proof}
The corollary follows directly from the proof of Theorem \ref{thm:AcapA*}.
\end{proof}

\begin{Corollary}
\label{cor:AHinvariant}
$\cA$ is $*$-invariant if and only if $\cA=A_h$.
\end{Corollary}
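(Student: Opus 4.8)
The plan is to deduce Corollary~\ref{cor:AHinvariant} from Theorem~\ref{thm:AcapA*} by unwinding what $*$-invariance of $\cA$ means. Recall that $\cA$ being $*$-invariant is, by definition, the statement $\cA = \cA^*$, or equivalently $\cA \subseteq \cA^*$ (the reverse inclusion then follows by applying $*$ again, since $(\cA^*)^* = \cA$).

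First I would prove the forward direction. Suppose $\cA$ is $*$-invariant, i.e.\ $\cA = \cA^*$. Then trivially $\cA \cap \cA^* = \cA$. By Theorem~\ref{thm:AcapA*}, $\cA \cap \cA^* = A_h$, so $\cA = A_h$, as desired.

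Next I would prove the converse. Suppose $\cA = A_h$. Since every element of $A_h$ is Hermitian, we have $A_h^* = A_h$; more precisely, for $T \in A_h$ one has $T^* = T \in A_h$, so $A_h$ is $*$-invariant. Hence $\cA = A_h$ is $*$-invariant.

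There is essentially no obstacle here: the corollary is a purely formal consequence of Theorem~\ref{thm:AcapA*} together with the trivial observation that $A_h$, consisting of self-adjoint matrices, is stable under $*$. The only point requiring a line of care is spelling out the equivalence between ``$\cA$ is $*$-invariant'' and ``$\cA = \cA^*$'' (as opposed to merely $\cA \subseteq \cA^*$), which follows since the map $T \mapsto T^*$ is an involution on $M_N(\C)$ carrying $\cA$ onto $\cA^*$ and vice versa. I would present the proof as the two short implications above, perhaps condensed into a single paragraph.
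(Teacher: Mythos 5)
Your proof is correct and follows essentially the same route as the paper: the forward direction is the observation that $*$-invariance gives $\cA=\cA\cap\cA^*=A_h$ by Theorem~\ref{thm:AcapA*}, and the converse is the trivial $*$-invariance of $A_h$. You merely spell out the (easy) details that the paper leaves implicit.
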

\begin{proof}
If $\cA=A_h$ then $\cA$ is trivially $*$-invariant. On the other hand, if we assume that
$\cA$ is $*$-invariant then the corollary follows directly from Theorem~\ref{thm:AcapA*}.
\end{proof}

\begin{Remark}
Corollary \ref{cor:AcapA_T=T*} says that if $\cA$ is $*$-invariant, then it is pointwise $*$-invariant, i.e., $T=T^*$ for all $T\in \cA$.
\end{Remark}

\begin{Remark}
Suppose that there exists $X\in \mathscr{B}$ such that $\Theta(X)\in\R_{>0} I$, then every $T\in \cA$ is self-adjoint and  Corollary \ref{cor:AHinvariant} holds true trivially.  Since $T\Theta(X)=\Theta(X)T^*$ for all $X\in \mathscr{B}$, if there is a point $x_0\in\mathrm{supp}(\mu)$ such that
\begin{gather*}
\lim_{\delta\to0} \left\| \frac{1}{\mu( (x_0-\delta, x_0+\delta) )} \Theta( (x_0-\delta,x_0+\delta)) - I \right\| = 0,
\end{gather*}
then it follows that $T=T^*$ and so Corollary~\ref{cor:AHinvariant} holds true. This is the case, for instance, for the examples given in \cite{alvarez2013orthogonal,AKdlR}, where $W(x_0)=I$ for some $x_0\in \mathrm{supp}(\mu)$. For Examples~\ref{ex:KdlRR} and~\ref{q-polynomials}, in general, there is no $x_0\in [-1,1]$ for which $W(x_0)=I$.
\end{Remark}

\section{Reducibility of matrix-valued orthogonal polynomials}
\label{sec:reducibility_with_moments}

Let $M_N(\C)[x]$ denote the set of $M_N(\C)$-valued polynomials in one variable~$x$. Let
$\mu$ be a f\/inite measure and $W$ be a weight matrix as in Section~\ref{sec:reducibility_no_moments}. In this section we assume that all the moments
\begin{gather*}
M_n=\int x^n W(x)\, d\mu(x), \qquad n\in\N,
\end{gather*}
exist and are f\/inite. Therefore we have a matrix-valued inner product on~$M_N(\C)$
\begin{gather*}
\langle P,Q \rangle = \int P(x)W(x)Q(x)^* \, d\mu(x), \qquad P,Q\in M_N(\C)[x],
\end{gather*}
where $*$ denotes the adjoint. By general considerations, e.g.,~\cite{DamaPS, GrunT}, it
follows that there exists a~unique sequence of monic
matrix-valued orthogonal polynomials $(P_n)_{n\in \N}$, where $P_n(x) = \sum\limits_{k=0}^n x^k P^n_{k}$ with $P^n_{k}\in  M_N(\C)$ and
$P^n_{n}=I$, the $N\times N$ identity matrix. The polynomials $P_n$ satisfy the orthogonality relations
\begin{gather*}
\langle P_n,P_m\rangle   =   \de_{nm} H_n, \qquad H_n\in M_N(\C),
\end{gather*}
where $H_n>0$ is the corresponding squared norm. Any other family $(Q_n)_{n\in\N}$ of matrix-valued orthogonal polynomials with respect to~$W$ is of the form $Q_n(x) = E_nP_n(x)$ for invertible matri\-ces~$E_n$. The monic orthogonal polynomials satisfy a three-term recurrence relation of the form
\begin{gather}
\label{eq:recrelat}
xP_n(x)=P_{n+1}(x)+B_{n}P_n(x)+C_nP_{n-1}(x), \qquad n\geq 0,
\end{gather}
where $P_{-1}=0$ and $B_n$, $C_n$ are matrices depending on $n$ and not on $x$.

\begin{Lemma}
\label{lem:symmetry_T}
Let $T\in \cA$. Then we have
\begin{enumerate}\itemsep=0pt
\item[$(1)$] The operator $T\colon M_{N}(\mathbb{C})[x]\to M_{N}(\mathbb{C})[x]$ given by $P\mapsto PT$ is symmetric with respect to~$\Theta$.
\item[$(2)$] $TP_n=P_nT$  for all $n\in \mathbb{N}$.
\item[$(3)$] $TH_n=H_nT^*$ for all $n\in \mathbb{N}$.
\item[$(4)$] $TM_n=M_nT^*$ for all $n\in \mathbb{N}$.
\item[$(5)$] $TB_n=B_nT$ and $TC_n=C_nT$ for all $n\in \N$.
\end{enumerate}
\end{Lemma}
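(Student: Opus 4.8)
The plan is to establish the five assertions in order, using each earlier item to bootstrap the next. For~(1), given $T\in\cA$ and $P,Q\in M_N(\C)[x]$, I would compute
\[
\langle PT,Q\rangle=\int P(x)T\,W(x)Q(x)^*\,d\mu(x)=\int P(x)W(x)T^*Q(x)^*\,d\mu(x)=\langle P,QT\rangle,
\]
using $TW(x)=W(x)T^*$ a.e., which is exactly the defining property of $\cA$ applied to the weight matrix. So the operator $P\mapsto PT$ is symmetric with respect to $\Theta$.

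For~(2), the standard argument for symmetric operators applies: since $P\mapsto PT$ is symmetric and maps polynomials of degree $\leq n$ to polynomials of degree $\leq n$ (it does not change the degree, as $T$ is a constant matrix and $x^nP^n_n=x^nI$ has leading term $x^nT$), one shows by induction that $P_nT$ lies in the span of $P_0,\dots,P_n$; writing $P_nT=\sum_{k\le n}P_k A_k$ and pairing with $P_m$ for $m<n$ gives $\langle P_nT,P_m\rangle=\langle P_n,P_mT\rangle=0$ since $P_mT$ has degree $\le m<n$, forcing $A_m=0$ for $m<n$; comparing leading coefficients gives $A_n=T$ (both $P_nT$ and $P_n$ are monic up to the factor $T$). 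Hence $P_nT=TP_n$ once one also checks $TP_n$ is the relevant polynomial—more directly, since $P_n$ is monic and $TP_n(x)$ has leading term $Tx^n$, while $P_n(x)T$ has leading term $x^nT$, and both satisfy the same orthogonality-type characterization; the cleanest route is to observe $TP_n-P_nT$ has degree $<n$ and is $\Theta$-orthogonal to all lower-degree polynomials, hence is zero. For~(3), pair $P_n$ with itself: $TH_n=T\langle P_n,P_n\rangle=\langle TP_n,P_n\rangle$—wait, I must be careful about which side $T$ acts; using (1) and (2), $TH_n = \langle P_n T, P_n\rangle = \langle P_n, P_n T\rangle = \langle P_n,P_n\rangle T^* = H_nT^*$, where the middle equality is symmetry and I have used that the inner product satisfies $\langle P, QT\rangle = \langle P,Q\rangle T^*$. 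For~(4), apply the same idea with monomials: $TM_n = \int x^n TW(x)\,d\mu = \int x^n W(x)T^*\,d\mu = M_nT^*$, which is in fact immediate from $TW=WT^*$ and does not even need the polynomials.

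For~(5), substitute $P_n\to TP_n=P_nT$ into the three-term recurrence~\eqref{eq:recrelat}. On one hand $x(TP_n) = T(xP_n) = TP_{n+1}+TB_nP_n+TC_nP_{n-1}$; on the other hand, using (2) on each term, $x(P_nT) = P_{n+1}T+B_nP_nT+C_nP_{n-1}T = TP_{n+1}+B_nTP_n+C_nTP_{n-1}$. Subtracting, $(TB_n-B_nT)P_n + (TC_n-C_nT)P_{n-1}=0$ for all $x$; since $P_n$ has degree $n$ and $P_{n-1}$ degree $n-1$, comparing coefficients of $x^n$ gives $TB_n=B_nT$, and then the remaining relation forces $TC_n=C_nT$. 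The only mildly delicate point is the degree/leading-coefficient bookkeeping in~(2)—making sure that $TP_n-P_nT$ really has degree strictly less than $n$ (its $x^n$-coefficients are $T$ and $T$, which cancel) and invoking that a polynomial of degree $<n$ orthogonal to $P_0,\dots,P_{n-1}$ in the matrix inner product must vanish—but this is the routine foundational fact about matrix orthogonal polynomials and causes no real trouble.
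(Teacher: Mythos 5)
Your proposal is correct and follows essentially the same route as the paper: direct computation for (1), the eigenfunction/leading-coefficient argument for (2) (which the paper simply delegates to Gr\"unbaum--Tirao, Proposition~2.10, while you spell it out), the same pairing computation for (3)--(4), and the same subtraction of the left- and right-multiplied recurrence for (5). The only cosmetic slips --- expanding $P_nT$ with coefficients on the right instead of the left, and writing $\langle P,QT\rangle=\langle P,Q\rangle T^*$ where one really needs $\langle P,TQ\rangle=\langle P,Q\rangle T^*$ together with item (2) --- do not affect the argument.
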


\begin{proof}
Let $P,Q\in M_{N}(\mathbb{C})[x]$. Then
\begin{gather*}
\langle PT,Q \rangle  =\int P(x)T  W(x)   Q(x)^* \, d\mu(x)= \int P(x)   W(x)   T^*Q(x)^* \, d\mu(x)\\
\hphantom{\langle PT,Q \rangle}{} = \int P(x)  W(x)  (Q(x)T)^* \, d\mu(x) =\langle P,QT\rangle,
\end{gather*}
so that $T$ is a symmetric operator. This proves~(1). It follows directly from (1) that the monic matrix-valued orthogonal polynomials are eigenfunctions for the operator $T$, see, e.g.,  \cite[Proposition~2.10]{GrunT}. Thus, for every $n\in \mathbb{N}$ there exists a constant matrix $\Lambda_n(T)$ such that  $P_nT=\Lambda_n(T)P_n$. Equating the leading coef\/f\/icients of both sides of the last equation, and using that $P_n$ is monic, yields $T=\Lambda_n(T)$. This proves~(2).

The proof of (3) follows directly from (2) and the fact that $T\in \cA$. We have
\begin{gather*}
TH_n  = \int P_n(x)T  W(x)   P_n(x)^*\, d\mu(x) = \int P_n(x)  W(x)  T^*P_n(x)^* \, d\mu(x)\\
\hphantom{TH_n}{} =\int P_n(x)  W(x)   (P_n(x)T)^* d\mu(x)=\int P_n(x)  W(x)  (TP_n(x))^* \, d\mu(x)  =H_nT^*.
\end{gather*}
The proof of (4) is analogous to that of (3), replacing the polynomials~$P_n$ by~$x^n$. Finally, we multiply the three-term recurrence relation~\eqref{eq:recrelat} by $T$ on the left and on the right and we subtract both equations. Using that $TP_n=P_nT$ and $TP_{n+1}=P_{n+1}T$ we get
\begin{gather*}
TB_nP_n+TC_nP_{n-1}=B_nTP_n+C_nTP_{n-1}.
\end{gather*}
The coef\/f\/icient of $x^n$  is $TB_n=B_nT$ and therefore we also have $TC_n=C_nT$.
\end{proof}

Corollary \ref{cor:AHinvariant} provides a criterion to determine whether the set of Hermitian elements of the commutant algebra
$A$ is equal to $\cA$. However, for explicit examples, it might be cumbersome to verify the $*$-invariance of~$\cA$ from
the expression of the weight. Our strategy now is to view~$\cA$ as a subset of a, in general, larger set whose $*$-invariance can be established more easily and that implies the $*$-invariance of~$\cA$. Motivated by Lemma~\ref{lem:symmetry_T} we consider a sequence~$(\Gamma_n)_n$ of strictly positive def\/inite matrices such that if $T\in \cA$, then $T\Gamma_n=\Gamma_nT^*$ for all~$n$. Then for each $n\in \N$ and $\mathcal{I}\subset \N$ we introduce the $*$-algebras
\begin{gather*}
A^\Gamma_n=A(\Gamma_n)=\{ T\in M_{N}(\C)\,|\, T\Gamma_n = \Gamma_n T  \}, \qquad A^\Gamma_{\mathcal{I}}=\bigcap_{n\in \mathcal{I}} A^\Gamma_n,
\end{gather*}
and the real vector spaces
\begin{gather}
\label{eq:definition_Acal_n}
\cA^\Gamma_n= \cA(\Gamma_n) =\{ T\in M_{N}(\C)\,|\, T\Gamma_n = \Gamma_n T^*  \}, \qquad \cA^\Gamma_{\mathcal{I}}=\bigcap_{n\in \mathcal{I}} \cA^\Gamma_n.
\end{gather}
It is clear from the def\/inition that  $\cA \subset \cA^\Gamma_n$ for all $n\in \N$.
\begin{Remark}
\label{rmk:discrete_measure}
For any subset $\mathcal{I}\subset \N$, the sequence $(\Gamma_n)_n$ induces a discrete $M_N(\C)$-valued measure
supported on $\mathcal{I}$
\begin{gather*}
d\Gamma_\mathcal{I}(x)=\sum_{n\in \mathcal{I}} \Gamma_n  \delta_{n,x}.
\end{gather*}
Theorem \ref{thm:AcapA*} applied to the measure $d\Gamma_\mathcal{I}$ yields that $\cA^\Gamma_{\mathcal{I}}\cap (\cA^\Gamma_{\mathcal{I}})^*$ is the subset of Hermitian matrices in $A^\Gamma_{\mathcal{I}}$.
\end{Remark}

\begin{Theorem}
\label{thm:AHinvariant}
If $\cA^\Gamma_{\mathcal{I}}$ is $*$-invariant for some non-empty subset $\mathcal{I}\subset \N$, then $\cA=A_h$. In particular, the
statement holds true if $\cA^\Gamma_n$ is $*$-invariant for some $n\in \N$.
\end{Theorem}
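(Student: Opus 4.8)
The plan is to deduce the statement from Theorem~\ref{thm:AcapA*}, applied not to $\Theta$ itself but to the auxiliary discrete matrix measure $d\Gamma_{\mathcal{I}}=\sum_{n\in\mathcal{I}}\Gamma_n\,\delta_{n,x}$ of Remark~\ref{rmk:discrete_measure}. Recall that $A_h\subset\cA$ always holds (this is the first half of the proof of Theorem~\ref{thm:AcapA*}), so it suffices to prove the reverse inclusion $\cA\subset A_h$; equivalently, by Corollary~\ref{cor:AHinvariant}, it would be enough to show that $\cA$ is $*$-invariant, but I would argue directly that every $T\in\cA$ is self-adjoint.

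First I would record the elementary inclusion $\cA\subset\cA^\Gamma_{\mathcal{I}}$: by construction of the sequence $(\Gamma_n)_n$ one has $T\Gamma_n=\Gamma_nT^*$ for every $T\in\cA$ and every $n$, so $\cA\subset\cA^\Gamma_n$ for all $n$, whence $\cA\subset\bigcap_{n\in\mathcal{I}}\cA^\Gamma_n=\cA^\Gamma_{\mathcal{I}}$ by~\eqref{eq:definition_Acal_n}. Now fix $T\in\cA$. Then $T\in\cA^\Gamma_{\mathcal{I}}$, and since $\cA^\Gamma_{\mathcal{I}}$ is assumed $*$-invariant we also have $T^*\in\cA^\Gamma_{\mathcal{I}}$; hence $T\in\cA^\Gamma_{\mathcal{I}}\cap(\cA^\Gamma_{\mathcal{I}})^*$.

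Next I would invoke Remark~\ref{rmk:discrete_measure}: since each $\Gamma_n$ is strictly positive definite, $d\Gamma_{\mathcal{I}}$ is a genuine positive $M_N(\C)$-valued measure, and $\mathcal{I}\neq\emptyset$ guarantees a Borel set on which it is strictly positive definite (e.g.\ a singleton $\{n\}$ with $n\in\mathcal{I}$), so the hypotheses of Lemma~\ref{lem:no_skew_symmetric} and Theorem~\ref{thm:AcapA*} are met for $d\Gamma_{\mathcal{I}}$. Theorem~\ref{thm:AcapA*} then gives that $\cA^\Gamma_{\mathcal{I}}\cap(\cA^\Gamma_{\mathcal{I}})^*$ consists exactly of the Hermitian elements of $A^\Gamma_{\mathcal{I}}$; in particular the $T$ above is Hermitian, i.e.\ $T=T^*$. (Alternatively one can bypass Theorem~\ref{thm:AcapA*} and apply the diagonalisation argument of Lemma~\ref{lem:no_skew_symmetric} directly to the skew-Hermitian matrix $T-T^*\in\cA^\Gamma_{\mathcal{I}}$, using the strict positivity of some $\Gamma_n$.) Being self-adjoint, $T$ satisfies $T\Theta(X)=\Theta(X)T^*=\Theta(X)T$ for all $X\in\mathscr{B}$, so $T\in A$, and therefore $T\in A_h$. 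Thus $\cA\subset A_h$, and with $A_h\subset\cA$ we conclude $\cA=A_h$. The final assertion follows by specialising to $\mathcal{I}=\{n\}$, for which $\cA^\Gamma_{\mathcal{I}}=\cA^\Gamma_n$.

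I do not anticipate a serious obstacle: the proof is in essence a reduction to the already-established Theorem~\ref{thm:AcapA*}. The one point requiring a little care is the passage to the discrete measure — one must note that $d\Gamma_{\mathcal{I}}$ is an admissible matrix measure which is strictly positive definite on some Borel set, and this is precisely where the non-emptiness of $\mathcal{I}$ and the strict positive-definiteness of the $\Gamma_n$ are used.
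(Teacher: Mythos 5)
Your proposal is correct and follows essentially the same route as the paper: both reduce to the discrete measure $d\Gamma_{\mathcal{I}}$ of Remark~\ref{rmk:discrete_measure} and apply Theorem~\ref{thm:AcapA*}/Corollary~\ref{cor:AcapA_T=T*} to conclude that every $T\in\cA$ is self-adjoint, hence lies in $A_h$. Your extra remark about where the non-emptiness of $\mathcal{I}$ and the strict positive-definiteness of the $\Gamma_n$ enter is a welcome clarification but not a departure from the paper's argument.
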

\begin{proof}
If $T\in\cA$, then $T\in \cA^\Gamma_n$ for all $n\in \mathcal{I}$. Since $\cA^\Gamma_\mathcal{I}$ is $*$-invariant, then $T^* \in \cA^\Gamma_n$ for all $n\in \mathcal{I}$. If we apply Corollary \ref{cor:AcapA_T=T*} to the measure in Remark \ref{rmk:discrete_measure}, we obtain $T=T^*$. Therefore $T\in A_h\subset \cA$ and thus $\cA$ is $*$-invariant. Hence the theorem follows from Corollary \ref{cor:AHinvariant}.
\end{proof}

\begin{Remark}
Two obvious candidates for sequences $(\Gamma_n)_n$ are given in Lemma \ref{lem:symmetry_T}, namely the sequence of squared norms $(H_n)_n$ and the sequence of even moments $(M_{2n})_{n}$.
\end{Remark}

\begin{Remark}
\label{rmk:SWS}
Let $\Theta$ be a $M_N(\C)$-valued measure, not necessarily with f\/inite moments and take a~positive def\/inite matrix $\Gamma$ such that $T\Gamma=\Gamma T^*$ for all $T\in \cA(\Theta)$. Let $S$ be a~positive def\/inite matrix such that $\Gamma=S^2$. We can now consider the $M_N(\C)$-valued measure $S^{-1}\Theta S^{-1}$. By a~simple computation, we check that  $T\in \cA(\Theta)$ if and only if $S^{-1}T S\in \cA(S^{-1}\Theta S^{-1})$. This gives
\begin{gather*}
\cA\big(S^{-1}\Theta S^{-1}\big)=S^{-1}  \cA(\Theta)   S.
\end{gather*}
Moreover, if $T\in \cA(\Theta)$, then $T\Gamma=\Gamma T^*$ implies that
$S^{-1}TS=ST^*S^{-1}=(S^{-1}TS)^*$. Hence $S^{-1}TS$ is self-adjoint for all $T\in \cA(\Theta)$. Then we have by Corollary \ref{cor:AHinvariant}
\begin{gather*}
A\big(S^{-1}\Theta S^{-1}\big)_h = \cA\big(S^{-1}\Theta S^{-1}\big).
\end{gather*}
On the other hand, if $U\in A_h(\Theta)$, then
\begin{gather*}
S^{-1}US S^{-1}\Theta(X)S^{-1} =S^{-1}U\Theta(X)S^{-1}=S^{-1}\Theta(X)S^{-1}SUS^{-1}\\
\hphantom{S^{-1}US S^{-1}\Theta(X)S^{-1}}{} =S^{-1}\Theta(X)S^{-1}\big(S^{-1}US\big)^*,
\end{gather*}
for all $X\in \mathscr{B}$. Therefore $S^{-1} A(\Theta)_h S \subset \mathscr{A}(S^{-1}\Theta S^{-1})=A(S^{-1} \Theta S^{-1})_h$. In general this is an inclusion, see Example~\ref{ex:one}.
\end{Remark}

\begin{Remark}
\label{rmk:normalization1}
Suppose that $\Theta$ is a $M_N(\C)$-valued measure with f\/inite moments and that~$M_{2n} \in \R_{>0} I$, respectively $H_n\in \R_{>0} I$, for some $n\in \N$. Then it follows from Lemma~\ref{lem:symmetry_T} and~\eqref{eq:definition_Acal_n}  that $T=T^*$ for all $T\in\cA^M_{2n}$, respectively $T\in \cA^N_n$. Then Theorem~\ref{thm:AHinvariant} says that $\cA=A_h$.
\end{Remark}

\begin{Remark}
\label{rmk:normalization}
Let $\Theta$ be a $M_N(\C)$-valued measure such that the f\/irst moment $M_0$ is f\/inite. Then there exists a positive def\/inite matrix $S$ such that
$M_0=S^2$. The measure $\widetilde \Theta = S^{-1}\Theta S^{-1}$ satisf\/ies
\begin{gather*}
\widetilde \Theta(\R) = S^{-1}\Theta(\R)S^{-1}= S^{-1}   M_0   S^{-1} = I.
\end{gather*}
Therefore by Remark \ref{rmk:normalization1} we have that $\cA(S^{-1}\Theta S^{-1}) = A(S^{-1}\Theta S^{-1})_h$. Observe that the normalization $\widetilde \Theta(\R)=I$ is assumed in \cite{DamaPS} so that in the setting of that paper the real subspace of Hermitian elements in the commutant coincides with the real vector space $\cA(\Theta)$.
\end{Remark}

\section{Examples}

In this section we discuss three examples of matrix-valued weights that exhibit dif\/ferent features. The f\/irst example is a slight variation of \cite[Example~2.6]{Tira15}.
\begin{Example}
\label{ex:one}
Let $\mu$ be the Lebesgue measure on the interval $[0,1]$, and let $W$ be the weight
\begin{gather*}
W(x)=\begin{pmatrix} x^2+x & x \\ x & x \end{pmatrix} = \begin{pmatrix} 1 & \frac{\sqrt{6}}{3} \\ 0 & 1 \end{pmatrix} \begin{pmatrix} x^2 & 0 \\ 0 & x \end{pmatrix}
\begin{pmatrix} 1 & 0 \\ \frac{\sqrt{6}}{3} & 1 \end{pmatrix}.
\end{gather*}
A simple computation gives that $A$ and $\cA$ are given by
\begin{gather*}
A=\C I, \qquad \cA=\R \begin{pmatrix} 1 & 0 \\ 0 & 1 \end{pmatrix} + \R \begin{pmatrix} 1 & -\frac{\sqrt{6}}{3} \\ 0 & 0 \end{pmatrix}.
\end{gather*}
Observe that $\cA$ is clearly not $*$-invariant since $\left(\begin{smallmatrix} 1 & -\frac{\sqrt{6}}{3} \\ 0 & 0 \end{smallmatrix}\right)^*\notin \cA$. Now we consider the sequen\-ce~$(M_{2n})_n$ of even moments. The f\/irst moment is given by $M_0=\left(\begin{smallmatrix} \frac23 & \frac{\sqrt{6}}{6} \\  \frac{\sqrt{6}}{6} & \frac12 \end{smallmatrix}\right)$ and the algebras~$A^M_0$ and~$\cA^M_0$ are
\begin{gather}
A^M_0 = \C  \begin{pmatrix} 1 & 0 \\ 0 & 1 \end{pmatrix}  + \C  \begin{pmatrix} \frac{\sqrt{6}}{6} & 1 \\ 1 & 0 \end{pmatrix},\nonumber \\
\cA^M_0 = \R \begin{pmatrix} 1 & 0 \\ 0 & 1 \end{pmatrix}  + \R  \begin{pmatrix} \frac{\sqrt{6}}{6} & 1 \\ 1 & 0 \end{pmatrix} + \R \begin{pmatrix} 1 & -\frac{\sqrt{3}}{6} \\ 0 & 0 \end{pmatrix} + i \R \begin{pmatrix} 1 & -\frac{2\sqrt{6}}{3} \\ \frac{\sqrt{6}}{2} & -1 \end{pmatrix}.\label{eq:example_ignacio_AM}
\end{gather}
This gives the inclusions $\R I \subsetneq (A^M_0)_h \subsetneq \cA^M_0$. It  is also clear from \eqref{eq:example_ignacio_AM} that
$\cA^M_0 \cap (\cA^M_0)^* = (A^M_0)_h$.

Now we proceed as in Remark \ref{rmk:normalization}, we take the positive def\/inite matrix $S$ such that $M_0=S^2$. Here $S=\frac{1}{15}\left(\begin{smallmatrix}\sqrt{6}+9 & 3\sqrt{6}-3 \\  3\sqrt{6}-3 & \frac32\sqrt{6}+6   \end{smallmatrix}\right)$. Then
\begin{gather*}
S^{-1}W(x)S^{-1}=
\frac{1}{25} \begin{pmatrix} (33+12\sqrt{6})x^2+(28-8\sqrt{6})x &  -(6+9\sqrt{6})x^2+(4+6\sqrt{6})x \\ -(6+9\sqrt{6})x^2+(4+6\sqrt{6})x &  (42-12\sqrt{6})x^2+(22+8\sqrt{6})x\end{pmatrix}.
\end{gather*}
We f\/inally have that
\begin{gather*}
\cA(S^{-1}\Theta S^{-1})= \R  I + \R  E +\R  F, \quad \cA(S^{-1}M_0S^{-1}) = \R I + \R  E+ \R  F + \R  G,
\end{gather*}
where
\begin{gather*}E= S^{-1} \begin{pmatrix} \frac{\sqrt{6}}{6} & 1 \\ 1 & 0 \end{pmatrix} S = \begin{pmatrix} \frac{\sqrt{6}}{6} & 1 \\ 1 & 0 \end{pmatrix}, \qquad G =iS^{-1} \begin{pmatrix} 1 & -\frac{2\sqrt{6}}{3} \\ \frac{\sqrt{6}}{2} & -1 \end{pmatrix} S = \begin{pmatrix} 0 & -i \\ i & 0 \end{pmatrix},\\
F= S^{-1}  \begin{pmatrix} 1 & -\frac{\sqrt{3}}{6} \\ 0 & 0 \end{pmatrix}   S = \frac{1}{25} \begin{pmatrix} 11+4\sqrt{6} & -(2+3\sqrt{6}) \\ -(2+3\sqrt{6}) & 14-4\sqrt{6} \end{pmatrix}.
\end{gather*}
Then we have the following inclusions:
\begin{gather*}
\R I = S^{-1}A(\Theta)_h S \subsetneq A\big(S^{-1}\Theta S^{-1}\big)_h= \cA(S^{-1}\Theta S^{-1}),
\end{gather*}
and
\begin{gather*}
S^{-1}\big(A^M_0\big)_h S \subsetneq A\big(S^{-1}M_0S^{-1}\big)_h = \cA \big(S^{-1}M_0S^{-1}\big).
\end{gather*}
\end{Example}

\begin{Example}
\label{ex:KdlRR}

Our second example is a family of matrix-valued Gegenbauer polynomials introduced in \cite{KoeldlRR}. For $\ell\in\frac12\N$ and $\nu> 0$, let
$d\mu(x)=(1-x^2)^{\nu-1/2}dx$ where $dx$ is the Lebesgue measure on $[-1,1]$  and let $W^{(\nu)}$ be the $(2\ell+1)\times(2\ell+1)$ matrix
\begin{gather*}
\big(W^{(\nu)}(x)\big)_{m,n}=  \sum_{t=\max(0, n+m-2\ell)}^{m}
 \al_{t}^{(\nu)}(m,n)
C_{m+n-2t}^{(\nu)}(x),\\
\alpha_{t}^{(\nu)}(m,n) = (-1)^{m}
\frac{n! m! (m+n-2 t)!}{t! (2\nu)_{m+n-2t} (\nu)_{n+m-t}}
\frac{(\nu)_{n-t} (\nu)_{m-t}}{(n-t)! (m-t)!}\frac{(n+m-2t+\nu)}{(n+m-t+\nu)}\\
\hphantom{\alpha_{t}^{(\nu)}(m,n) =}{}
 \times (2\ell-m)!(n-2\ell)_{m-t}(-2\ell-\nu)_{t} \frac{(2\ell+\nu)}{(2\ell)!} ,
\end{gather*}
where $n,m\in\{0,1,\dots, 2\ell\}$ and $n\geq m$. The matrix $W^{(\nu)}$ is extended to a~symmetric matrix,
$\left(W^{(\nu)}(x)\right)_{m,n}=\left(W^{(\nu)}(x)\right)_{n,m}$. In \cite[Proposition~2.6]{KoeldlRR} we proved
that~$A$ is generated by the identity matrix $I$ and the involution $J\in M_{2\ell+1}(\C)$ def\/ined by $e_j \mapsto e_{2\ell-j}$

Now we will use  Theorem~\ref{thm:AHinvariant} to prove that~$A_h=\mathscr{A}$. This says that there is no further non-unitary
reduction of the weight $W$. As a sequence of positive def\/inite matrices we take the squared norms of the monic polynomials,
$(\Gamma_n)_n=(H_n)_n$, that were explicitly calculated in \cite[Theorem~3.7]{KoeldlRR} and are given by the following diagonal matrices
\begin{gather*}
\bigl( H^{(\nu)}_n\bigr)_{i,k} = \delta_{i,k}  \sqrt{\pi}  \frac{\Ga(\nu+\frac12)}{\Ga(\nu+1)}
\frac{\nu(2\ell+\nu+n)}{\nu+n} \frac{n! (\ell+\frac12+\nu)_n (2\ell+\nu)_n}{(\nu+k)_n(2\ell+2\nu+n)_n(2\ell+\nu-k)_n} \\
\hphantom{\bigl( H^{(\nu)}_n\bigr)_{i,k} =}{}
\times \frac{k! (\ell+\nu)_n(2\ell-k)! (n+\nu+1)_{2\ell}}{(2\ell+\nu+1)_n(2\ell)! (n+\nu+1)_k (n+\nu+1)_{2\ell-k}}.
\end{gather*}
For any $n\in \N$ we choose $\mathcal{I}=\{n,n+1\}$. If we take $T\in  \cA^\Gamma_{\mathcal{I}}$, i.e., $TH^{(\nu)}_n=H^{(\nu)}_nT^*$ and $TH^{(\nu)}_{n+1}=H^{(\nu)}_{n+1}T^*$, it follows that
\begin{gather*}
T_{i,j} =\frac{\big(H^{(\nu)}_n\big)_{j,j}}{(H^{(\nu)}_n)_{i,i}} \overline T_{j,i}
   = \frac{j!(2\ell-j)!(\nu+i)_n(2\ell+\nu-i)_n(n+\nu+1)_i(n+\nu+1)_{2\ell-i}}{i!(2\ell-i)!(\nu+j)_n(2\ell+\nu-j)_n(n+\nu+1)_j(n+\nu+1)_{2\ell-j}} \overline T_{j,i}.
\end{gather*}
It follows directly from this equation that $T_{i,i}\in \mathbb{R}$ and $T_{i,2\ell-i} =\overline T_{2\ell-i,i}$. Now we observe that
\begin{gather}
 T_{i,j}= \frac{\big(H^{(\nu)}_n\big)_{j,j}}{\big(H^{(\nu)}_n\big)_{i,i}} \overline T_{j,i} = \frac{\big(H^{(\nu)}_n\big)_{j,j}}{(H^{(\nu)}_n)_{i,i}} \frac{\big(H^{(\nu)}_{n+1}\big)_{i,i}}{\big(H^{(\nu)}_{n+1}\big)_{j,j}}    T_{i,j} \nonumber\\
\hphantom{T_{i,j}}{}  =\frac{(\nu+j+n)(\nu+j+n+1)(2\ell+n+\nu-j)(2\ell+n+\nu+1-j)}{(\nu+i+n)(\nu+i+n+1)(2\ell+n+\nu-i)(2\ell+n+\nu+1-i)}  T_{i,j}.\label{eq:condition_T}
\end{gather}
Equation~\eqref{eq:condition_T} implies that $T_{i,j}=0$ unless $j=i$ or $j=2\ell-i$. Hence~$T$ is self-adjoint and thus~$\cA^\Gamma_\mathcal{I}$ is $*$-invariant and from Theorem~\ref{thm:AHinvariant} we have~$\cA=A_h$. We conclude that~$\cA$ is the real span of~$\{I,J\}$, and so there is no further non-unitary reduction.
\end{Example}

\begin{Example}
\label{q-polynomials}
Our last example is a $q$-analogue of the previous example for $\nu=1$. This sequence of  matrix-valued orthogonal polynomials
 matrix analogues of a subfamily of Askey--Wilson polynomials and were obtained by studying matrix-valued spherical functions
 related to the quantum analogue of $\SU(2)\times \SU(2)$.  For any $\ell\in \frac12 \N$ and $0<q<1$, we have the measure $d\mu(x)=\sqrt{1-x^2}\, dx$ supported on $[-1,1]$ and a $(2\ell+1)\times (2\ell+1)$ weight matrix $W$ which is given in \cite[Theorem~4.8]{AKR15}, we omit the explicit expression here and we give, instead, the explicit expression for the squared norms $H_n$ of the monic orthogonal polynomials
\begin{gather*}
(H_n)_{i, j}  = \delta_{i, j}
\frac{q^{-2\ell}2^{-2n} (q^2,q^{4\ell+4};q^2)^2_n (1-q^{4\ell+2})^2}{(q^{2i+2},q^{4\ell-2i+2};q^2)^2_n(1-q^{2n+2i+2})(1-q^{4\ell-2i+2n+2})}.
\end{gather*}
The expression for $H_n$ is obtained combining Theorem~4.8 and Corollary~4.9 of~\cite{AKR15}. The commutant algebra is
generated by $\{I,J\}$ as in the previous example, see \cite[Proposition~4.10]{AKR15}. We take $(\Gamma_n)_n=(H_n)_n$, $\mathcal{I}=\{n,n+1\}$ for any $n\in \N$ and observe that $TH_n=H_nT^*$ and $TH_{n+1}=H_{n+1}T^*$ implies
\begin{gather*}
T_{i,j}  = \frac{\big(H^{(\nu)}_n\big)_{j,j}}{\big(H^{(\nu)}_n\big)_{i,i}}  \overline T_{j,i}= \frac{\big(H^{(\nu)}_n\big)_{j,j}}{\big(H^{(\nu)}_n\big)_{i,i}} \frac{\big(H^{(\nu)}_{n+1}\big)_{i,i}}{\big(H^{(\nu)}_{n+1}\big)_{j,j}}   T_{i,j}\\
\hphantom{T_{i,j}}{}
=\frac{(1-q^{2n+2j+2})(1-q^{2n+2j+4})(1-q^{4\ell+2n-2i+2})(1-q^{4\ell+2n-2i+4})}{(1-q^{2n+2i+2})(1-q^{2n+2i+4})(1-q^{4\ell+2n-2j+2})(1-q^{4\ell+2n-2j+4})}T_{i,j}.
\end{gather*}
As in the previous example, it follows that~$T$ is self-adjoint and therefore, $\cA=A_h$. Hence there is no further non-unitary reduction  for~$W$.
\end{Example}

\begin{Remark}
Theorem \ref{thm:AHinvariant} can be used to determine the irreducibility of a weight matrix. In fact, with the commutant algebras already determined in~\cite{KoeldlRR} and~\cite{AKR15},
Theorem~\ref{thm:AHinvariant}  implies that the restrictions of the weight matrices of Examples~\ref{ex:KdlRR} and~\ref{q-polynomials} to the eigenspaces of the matrix~$J$ are irreducible.  For some explicit cases see \cite[Section~8]{KoelvPR1}.
\end{Remark}

\subsection*{Acknowledgements}

We thank I.~Zurri\'an for pointing out a similar example to Example~\ref{ex:one} to the f\/irst author.
The research of Pablo Rom\'an is supported by the Radboud Excellence Fellowship.
We would like to thank the anonymous referees for their comments and remarks, that have helped us to improve the paper.

\pdfbookmark[1]{References}{ref}
\LastPageEnding

\end{document}